\newcommand{\Z}{\mathbb{Z}}
\newcommand{\N}{\mathbb{N}}
\newcommand{\R}{\mathbb{R}}
\newcommand{\rr}{\mathcal{R}}
\newcommand{\calR}{\mathcal{R}}
\newcommand{\ear}{E(A,\calR)}
\newcommand{\Rcal}{\mathcal{R}}
\newcommand{\As}{\mathcal{A^{\sharp}}}
\newtheorem{theorem}{Theorem}[section]
\newtheorem{proposition}[theorem]{Proposition}
\newtheorem{definition}[theorem]{Definition}
\newtheorem{spec}[theorem]{Spectral Conjecture}
\newtheorem{wgenspec}[theorem]{Generalized Spectral Conjecture
(weak form, 1991) }
\newtheorem{sgenspec}[theorem]{Generalized Spectral Conjecture
(strong form, 1993) }
\newtheorem{remark}[theorem]{Remark}
\newtheorem{lemma}[theorem]{Lemma}
\newtheorem{example}[theorem]{Example}
\numberwithin{equation}{section}
\begin{document}


\keywords{nonnegative matrix; spectra; shift equivalence; spectral conjecture}
\subjclass[2010]{Primary 15B48; Secondary 37B10}

\title[SSE and GSC]{Strong shift equivalence and the Generalized Spectral Conjecture
for nonnegative matrices}
\author{Mike Boyle and Scott Schmieding}

 \dedicatory{Dedicated to Hans Schneider, in  memoriam} 

 
\maketitle

\begin{abstract}
Given matrices $A$ and $B$ shift equivalent
over a dense subring $\rr$  of $\R$, with
$A$  primitive, we show that $B$
is strong shift equivalent over $\rr$
to a primitive matrix. This result shows that the weak form
of  the Generalized Spectral Conjecture for primitive matrices
implies the strong form. The foundation of this work is the
recent  result that
for any ring $\rr$,
the group $\textnormal{NK}_1(\rr)$
of algebraic K-theory classifies
the refinement of shift equivalence
by strong shift equivalence for matrices over $\rr$.
\end{abstract}

\tableofcontents

\section{Introduction}

The purpose of this paper
\footnote{
This paper is an outgrowth of the paper \cite{BH91}, for which its 
authors were awarded (along with Robert Thompson) 
the second Hans Schneider Prize.} 
 is to prove the following theorem
and explain its context.

\begin{theorem}\label{twoconj}
Suppose $\rr$ is a dense subring of $\R$,
$A$ is a primitive matrix over $\rr$ and
$B$ is a matrix over $\rr$ which is shift equivalent
over $\rr$ to $A$.

Then $B$ is strong shift equivalent over  $\rr$ to a primitive matrix.

\end{theorem}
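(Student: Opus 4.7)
The plan is to reduce the theorem, via the $\textnormal{NK}_1(\rr)$ classification of SSE within SE classes, to the existence of a primitive representative in every SSE class inside the SE class of $A$, and then to produce such representatives by a direct block construction exploiting both the primitivity of $A$ and the density of $\rr$ in $\R$.

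\medskip

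\textbf{Reduction.} By the classification theorem cited in the abstract, within the shift equivalence class of $A$ over $\rr$ the strong shift equivalence classes form a torsor under $\textnormal{NK}_1(\rr)$. Let $u \in \textnormal{NK}_1(\rr)$ be the element carrying the SSE class of $A$ to the SSE class of $B$. To prove the theorem it then suffices to produce a primitive matrix $A'$ over $\rr$ which is shift equivalent to $A$ and whose SSE class equals $A \cdot u$ in this torsor, for then $A'$ is automatically SSE to $B$ over $\rr$.

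\medskip

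\textbf{Construction.} To build such an $A'$, I enlarge $A$ to a block matrix
\[
A' \;=\; \begin{pmatrix} A^N & X \\ Y & Z \end{pmatrix},
\]
where $N$ is chosen large enough that $A^N$ is strictly positive, the block $Z$ is built from a concrete representative of $u$ (in particular $Z$ is arranged so as to contribute only zero to the nonzero spectrum), and $X,Y$ are rectangular blocks with entries in $\rr$. One must then verify: (a) the nonzero spectrum of $A'$ equals that of $A^N$, so that $A' \sim_{SE} A$ over $\rr$; (b) the SSE class of $A'$ relative to $A$ in $\textnormal{NK}_1(\rr)$ is exactly $u$; and (c) $A'$ is primitive over $\rr$.

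\medskip

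\textbf{Main obstacle.} The hard part is meeting (a), (b), and (c) simultaneously with entries in $\rr$. Density of $\rr$ in $\R$ is essential, because it allows $X$ and $Y$ to be chosen strictly positive with entries in $\rr$ without perturbing the nonzero spectrum or the $\textnormal{NK}_1$ class; strict positivity of $A^N$ then propagates through the block structure to give primitivity of a sufficiently high power of $A'$. The delicate step is the $\textnormal{NK}_1$ computation for $A'$: one must identify how the invariant from the classification theorem transforms under this enlargement and show that the transformation matches the chosen representative of $u$. Once that identification is in place, (a) and (c) are arranged by standard positivity/perturbation arguments and the theorem follows.
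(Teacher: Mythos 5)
Your reduction via the $\textnormal{NK}_1(\rr)$ classification is correct and matches the paper's starting point: by Theorem \ref{sseclassif}, $B$ is SSE over $\rr$ to $A\oplus N$ for some nilpotent $N$ over $\rr$, and it suffices to show this direct sum is SSE over $\rr$ to a primitive matrix. But the construction you then sketch has two genuine problems, one a concrete error and one a gap that is in fact the entire content of the proof.

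The concrete error: putting $A^N$ in the upper-left corner is incompatible with goal (a). The spectral radius of $A^N$ is $\lambda_A^N$, so $A^N$ (and hence any $A'$ with the same nonzero spectrum as $A^N$) is not shift equivalent to $A$ over $\rr$ unless $\lambda_A=1$. Moreover, even if you replace $A^N$ by $A$, matching nonzero spectra does not yield shift equivalence over a general subring $\rr\subset\R$ — SE over $\rr$ is the isomorphism class of an $\rr[t,t^{-1}]$-module, a much finer invariant than the spectrum — so ``same nonzero spectrum, hence $A'\sim_{SE}A$'' is not a valid inference.

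The gap: you write that ``the delicate step is the $\textnormal{NK}_1$ computation for $A'$... Once that identification is in place, (a) and (c) are arranged by standard positivity/perturbation arguments.'' This is backwards. In the paper the $\textnormal{NK}_1$ bookkeeping is free — by construction one keeps track of an explicit nilpotent $M$ that stays SSE over $\rr$ to $N$ throughout every move — and what is \emph{not} free is making the resulting matrix simultaneously nonnegative and primitive. That requires: (i) shrinking $N$ (by conjugation by a matrix in $\mathrm{SL}(n,\rr)$, using density of $\rr$ in $\R$) to a small nilpotent $N'$; (ii) a quantitative clearing argument (Proposition \ref{nilpotentproposition} via Lemma \ref{nilpotentlemma}) producing $M$ SSE over $\rr$ to $N$ with prescribed small sup-norm and with $\textnormal{tr}(|M|^k)=0$ for $k\le K$; (iii) invoking the Submatrix Theorem of \cite{BH91} — which needs exactly those norm and trace inequalities comparing $|3M|$ with $A$ — to embed $|3M|$ as a proper principal submatrix of a primitive $C$ SSE over $\rr$ to $A$; and (iv) an explicit conjugation of $C\oplus M_0$ by block-elementary matrices over $\rr$ (with a parameter $\epsilon\in\rr$, $1/3<\epsilon<2/3$) to produce a nonnegative, and in fact primitive, matrix $G$. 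Without (ii)--(iv) there is no way to arrange strict positivity of the rectangular blocks while ``not perturbing the $\textnormal{NK}_1$ class'' — the perturbation and the class-preservation are in tension, and the paper's quantitative lemmas are precisely what resolve it.
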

We begin with the context. By ring, we mean a ring with 1;
by a semiring, we mean a semiring containing $\{0,1\}$.
A primitive matrix is a square matrix
which is nonnegative (meaning
entrywise nonnegative) such that for some $k>0$ its $k$th power
is a positive matrix.
Definitions and more background for shift equivalence (SE) and
strong shift equivalence (SSE) are given in Section
\ref{sesect}.

We recall the Spectral Conjecture
for primitive matrices from \cite{BH91}.
In the statement,
$\Delta = (d_1, \dots ,d_k)$ is a
$k$-tuple of nonzero complex numbers.
$\Delta$ is the
{\it nonzero spectrum} of a matrix $A$ if $A$ has
characteristic polynomial of the form
$\chi_A(t)=t^m\prod_{1\leq i \leq k}(t-d_i)$.
$\Delta $ has a {\it Perron value} if there exists $i$
such that $d_i>|d_j|$ when $j\neq i$.
The
{\it trace} of $\Delta$ is
$\textnormal{tr}(\Delta ) =
d_1 + \cdots + d_k$.
$\Delta^n$ denotes $((d_1)^n, \dots ,(d_k)^n)$,
the tuple of $n$th powers;
and the {\it $n$th net trace} of $\Delta$ is
\[
\textnormal{tr}_n(\Delta ) = \sum_{d|n}\mu (n/d) \textnormal{tr}(\Delta ^d)
\]
in which $\mu$ is the M\"{o}bius function
($\mu (1) =1$; $\mu (n)= (-1)^r$ if $n$ is the product of $r$
distinct primes; $\mu (n) = 0$ if $n$ is divisible by the square
of a prime).

\begin{spec} \cite{BH91}
Let $\rr$ be a subring of $\R$. Then $\Delta$ is the nonzero spectrum
of some primitive matrix over $\rr$ if and only if the following
conditions hold:
\begin{enumerate}
\item
$\Delta$ has a Perron value.
\item The coefficients of the polynomial
$\prod_{i=1}^k(t-d_i)$ lie in $\rr$.
\item
If $\rr = \Z$, then for all positive integers $n$,
$\textnormal{tr}_n(\Delta )\geq 0 $; \\
if $\rr \neq \Z$, then for all positive integers $n$ and $k$,
\\ (i) $\textnormal{tr}(\Delta^n) \geq 0 $ and
 (ii) $\textnormal{tr}(\Delta^n) > 0 $ implies
$\textnormal{tr}(\Delta^{nk}) > 0 $.
\end{enumerate}
\end{spec}
It is not difficult to check that the nonzero spectrum
of a primitive matrix satisfies the three conditions \cite{BH91}.
(We remark, following \cite{JohnsonLaffeyLoewy1996}
it is known that the nonzero spectra
of symmetric primitive matrices
cannot possibly have such a
simple characterization.)

To understand the possible spectra of  nonnegative
matrices is a classical problem of linear algebra
(for early background see e.g. \cite{BH91}) on which
interesting progress continues (see e.g.
\cite{EllardSmigoc2013,
Laffey2012BHTheorem,
LLS2009, LLS13} and their references).     Understanding
the nonzero spectra of primitive matrices is a variant of
this problem and also an approach to it: to know the minimal size
of a primitive matrix with a prescribed nonzero spectrum is to solve the
classical problem (for details, see \cite{BH91}); and it is in
the primitive case that the Perron-Frobenius constraints
manifest most simply.

Finally, as the  spectra of matrices over various subrings
of $\R$ appear in applications,
in which the nonzero part of the spectrum is sometimes
the relevant part \cite{Boyle91matrices, BH91},  it is natural to consider
the nonzero spectra of matrices over arbitrary subrings of
 $\R$.

The Spectral Conjecture has been proved in enough cases that
it seems almost certain to be  true in general.
For example, it is true under any of the following conditions:
\begin{itemize}
\item
The Perron value of $\Lambda$ is in $\rr$ (this always
holds when $\rr=\R$)  or is a quadratic
integer over $\rr$  \cite{BH91}.
\item
$\textnormal{tr}(\Lambda )> 0$ \cite[Appendix 4]{BH91}
\item
$\rr =\Z\textnormal{ or } \mathbb Q$ \cite{S8}.
\end{itemize}
 The general proofs in
\cite{BH91} do not give even remotely effective general bounds on the
size of a primitive matrix realizing a given nonzero spectrum.
The methods used  in \cite{S8} for the case
$\rr=\Z$ are much more tractable but still very complicated.
However, there is now an elegant  construction of Tom Laffey
\cite{Laffey2012BHTheorem}
which proves the conjecture for $\rr=\R$ in the
central special case of positive
trace, and in some other cases; where it applies,
the construction provides meaningful
bounds on the size of the realizing matrix in terms
of the spectral gap.

The nonzero spectrum of a matrix is a \lq\lq stable\rq\rq\ or
 \lq\lq eventual\rq\rq\    invariant of a matrix. For a matrix over a field,
an obvious finer invariant is the isomorphism class
of the nonnilpotent part of its action as a linear transformation.
The classification of matrices over a field by  this invariant
is the same as the  classification up to shift equivalence
 over the field;    for matrices over general rings,
from the module viewpoint (see Sec.\ref{sesect}),
shift equivalence
is the natural generalization
of the isomorphism class of this nonnilpotent linear transformation.
For some rings, an even finer invariant
 is the strong shift equivalence class. The
Generalized Spectral Conjecture  of Boyle and Handelman
(in both forms below)  heuristically is
saying that only the obvious necessary spectral
conditions constrain the eventual algebra of a primitive
matrix over a subring of $\R$, regardless of the subring
under consideration.

\begin{wgenspec}
Suppose $\rr$ is a subring of $R$ and $A$ is a square matrix over
$\rr $ whose nonzero spectrum satisfies the three necessary
conditions of the Spectral Conjecture. Then $A$ is SE over
$\rr$ to a primitive matrix.
\end{wgenspec}

\begin{sgenspec}
Suppose $\rr$ is a subring of $R$ and $A$ is a square matrix over
$\rr $ whose nonzero spectrum satisfies the three necessary
conditions of the Spectral Conjecture. Then $A$ is SSE over
$\rr$ to a primitive matrix.
\end{sgenspec}

The weak form was stated in \cite[p.253]{BH91} and
\cite[p.124]{BH93}.
The strong form was stated in
\cite[Sec. 8.4]{Boyle91matrices}), along with an
explicit admission that the authors of the conjecture did not
know if the conjectures were equivalent (not knowing if
shift equivalence over a ring implies strong shift equivalence
over it).
Following \cite{BoSc1}
(see Theorem \ref{sseclassif}), we know now that the strong form
of the Generalized Spectral Conjecture was not a vacuous generalization:
there are subrings of $\R$ over which SE does not imply
SSE (Example \ref{badrealring}).
The results of \cite{BoSc1} also provide enough structure that
we can prove Theorem \ref{twoconj}, which shows
that the two forms
of the Generalized Spectral Conjecture are equivalent.

{\bf Note!}  In  contrast to the
statement of the Generalized Spectral Conjecture for
{\it primitive} matrices,
it is {\it not} the case that
the existence of a strong shift equivalence
over $\rr$ from a matrix $A$ over $\rr$
to a nonnegative matrix can  in general be characterized
by a spectral condition on $A$.
There are dense subrings of $\R$
over which there are nilpotent matrices which are not SSE
to nonnegative matrices
(Remark \ref {nilpotentnonneg}).

There is some motivation from
symbolic dynamics for pursuing the zero trace case of the GSC.
The Kim-Roush and Wagoner primitive
matrix  counterexamples \cite{S11, Wagoner2000} to
Williams'
conjecture SE-$\Z_+$  $\implies $ SSE-$\Z_+$ rely absolutely
on certain zero-positive patterns of traces of powers of the
given matrix. We still do not know whether the refinement of
SE-$\Z_+$  by SSE-$\Z_+$
is algorithmically undecidable or (at another extreme) if it allows some finite
description involving such sign patterns. We are looking
for any related insight.

\section{Shift equivalence and strong shift equivalence} \label{sesect}
Suppose $\rr$ is a subset of a semiring and $\rr$ contains $\{0,1\}$.
(For example, $\rr$ could be  $\Z,\Z_+,\{0,1\},\R,\R_+, \ \dots \ $)
Square matrices $A,B$  over $\rr$ (not necessarily of the same
size)  are
{\it elementary strong shift equivalent over $\rr$} (ESSE-$\rr$) if there exist
matrices $U,V$ over $\rr$ such that $A=UV$ and $B=VU$.
Matrices $A,B$ are
{\it  strong shift equivalent over $\rr$} (SSE-$\rr$) if there
are a positive integer $\ell$
(the {\it lag} of the given SSE) and
 matrices $A=A_0,A_1,\dots ,A_{\ell}=B$ such that $A_{i-1}$ and $A_i$
are ESSE-$\rr$, for $1\leq i \leq \ell$. For matrices over a
subring of $\R$, the relation ESSE-$\rr$ is never transitive.
For example, if matrices
 $A,B$ are ESSE over $\R$, $j>1$ and $A^j\neq 0$, then $B^{j-1}\neq
 0$;  but if $A$ is the $n\times n$ matrix such that
$A(i,i+1)=0$ for $1\leq i <n$ and $A=0$ otherwise,
then $A$ is SSE-$\rr$ to $(0)$.
Over any ring $\rr$, the relation SSE-$\rr$ on square matrices is
generated by similarity over $\rr$ ($U^{-1}AU \sim A$) and
nilpotent extensions,
$
\left(\begin{smallmatrix} A&X\\0&0
\end{smallmatrix}\right) \sim A
\sim
\left(\begin{smallmatrix} 0&X\\0&A
\end{smallmatrix}\right) $
\cite{MallerShub1985}.

Square matrices  $A,B$ over $\rr$ are
{\it shift equivalent over $\rr$} (SE-$\rr$)
if there exist a positive integer $\ell$ and
matrices $U,V$ over $\rr$ such that the following hold:
\begin{align*}
A^{\ell} &= UV \qquad B^{\ell}=VU \\
AU &= UB \qquad BV = VA \ .
\end{align*}
Herem $\ell$ is the {\it lag} of the given SE.
It is always the case that SSE over $\rr$ implies SE over $\rr$:
from a given lag $\ell$ SSE one easily creates a lag $\ell$ SE \cite{Williams73}.
For certain semirings $\rr$, including above all $\rr=\Z_+$, the relations of
SSE and SE over $\rr$ are significant for symbolic dynamics.
The relations were introduced by Williams for the cases
 $\rr=\Z_+$ and $\rr=\{0,1\}$ to study the classification of
shifts of finite type.
Matrices over $\mathbb Z_+$ are SSE over $\Z_+$ if and only if
they define topologically conjugate shifts of finite type.
However, the relation  SSE-$\Z_+$  to this day remains
mysterious and is not even know to be decidable.
In contrast, SE-$\Z_+$
is a tractable, decidable, useful and very strong invariant of SSE-$\Z_+$.

Suppose now $\rr$ is a ring,
 and $A$ is $n\times n$ over $\rr$.
T
o see the shift equivalence relation SE-$\rr$ more conceptually,
recall that the direct limit $G_A$ of $\rr^n$ under the
$\rr$-module homomorphism  $x\mapsto Ax$
is the set of equivalence classes $[x,k]$ for $x\in \rr^n, k\in \Z_+$
under the equivalence relation $[x,k]\sim [y,j]$ if there
exists $\ell >0$ such that
$A^{j+\ell}x =A^{k+\ell}y $. $G_A$ has a well defined
group structure ($[x,k]+[y,j]=[A^kx +A^jy,j+k]$) and is an
$\rr$-module ($r: [x,k]\mapsto [xr,k]$). $A$ induces an
$\rr$-module isomorphism $\hat A: [x,k]\mapsto [Ax,k]$
with inverse $[x,k]\mapsto [x,k+1]$. $G_A$ becomes an
$\rr [t]$ module
(also an $\rr [t,t^{-1}]$ module) with $t: [x,k]\mapsto [x,k+1]$.
$A$ and $B$ are SE-$\rr$ if and only if
these $\rr [t]$-modules are
isomorphic (equivalently, if and only if they
are isomorphic as
$\rr [t,t^{-1}]$ modules).
 If the square matrix $A$ is $n\times n$, then
$I-tA$ defines a homomorphism $\rr^n \to \rr^n$ by the
usual multiplication $v\mapsto (I-tA)v$, and $\text{cok}(I-tA)$
is an $\rr[t]$-module which is isomorphic to the
$\rr[t]$-module $G_A$. For more detail and references on
these relations (by no means original to us)
 see \cite{BoSc1,LindMarcus1995}.

Williams introduced  SE and SSE in  the 1973 paper \cite{Williams73}.
For any principal ideal domain $\rr$,
Effros showed SE-$\rr$ implies SSE-$\rr$
in the 1981 monograph \cite{Effros1981}
(see \cite{Williams1992}
for Williams' proof for the case $\rr = \Z$).
In the 1993 paper \cite{BH93},
Boyle and Handelman extended this result to the case that
$\rr$ is a Dedekind domain (or, a little  more generally,  a Pr{\"u}fer domain).
Otherwise, the relationship of SE and SSE of matrices
over a ring remained open until the recent
paper \cite{BoSc1}, which
explains the relationship in general as follows.

\begin{theorem} \label{sseclassif} \cite{BoSc1}
Suppose $A,B$ are SE over a ring $\rr$.
\begin{enumerate}
\item
There is a nilpotent matrix $N$ over $\rr$
such that  $B$ is SSE over $\rr$ to the
matrix
$A\oplus N=
\begin{pmatrix} A & 0 \\ 0 & N
\end{pmatrix} $.
\item
The map $[I-tN] \to [A \oplus N]_{SSE}$ induces a bijection from
$\textnormal{NK}_{1}(\mathcal R)$ to the set of SSE classes of matrices
over $\rr$ which are in the SE-$\rr$ class of $A$.
\end{enumerate}
\end{theorem}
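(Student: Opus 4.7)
The plan is to construct an $\textnormal{NK}_1$-valued invariant of the SSE-$\rr$ class and use it to establish both parts simultaneously. To each square matrix $M$ over $\rr$, associate $[I - tM] \in K_1(\rr[t])$; under the augmentation $t \mapsto 0$ this maps to $[I] = 0 \in K_1(\rr)$, so the class lies in $\textnormal{NK}_1(\rr) = \ker(K_1(\rr[t]) \to K_1(\rr))$. The key lemma is that ESSE-$\rr$ preserves this class: if $A = UV$ and $B = VU$ with rectangular $U, V$ over $\rr$, then $[I - tUV] = [I - tVU]$ in $K_1(\rr[t])$ by the standard Whitehead-lemma identity after stabilization by identity summands. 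Fixing $A$, the map $\phi([B]_{SSE}) = [I - tB][I - tA]^{-1} \in \textnormal{NK}_1(\rr)$ is therefore well-defined on SSE-$\rr$ classes in the SE-$\rr$ class of $A$. A short computation using the same lemma gives $\phi([A \oplus N]_{SSE}) = [I - tN]$, so $\phi$ is the prospective inverse of the map in the statement.

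For part (1), I begin from the $\rr[t]$-module isomorphism $\text{cok}(I - tA) \cong \text{cok}(I - tB)$ supplied by SE-$\rr$. After stabilizing both sides by identity summands so the presenting matrices become square of matching sizes, this isomorphism lifts to an equivalence $P(I - tA \oplus I_p)Q = I - tB \oplus I_q$ with $P, Q \in \GL(\rr[t])$. The Bass fundamental theorem supplies a split decomposition $K_1(\rr[t]) = K_1(\rr) \oplus \textnormal{NK}_1(\rr)$, and I decompose $[P]$ accordingly. The $K_1(\rr)$-piece is realized by invertibles over $\rr$ itself and can be absorbed into the equivalence by a similarity, which is an ESSE-$\rr$ move. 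The $\textnormal{NK}_1(\rr)$-piece is represented by $I - tN$ for some nilpotent $N$ over $\rr$ --- the standard description of $NK_1$-classes via nilpotent endomorphisms --- and adjoining $N$ as a direct summand to $A$ absorbs this piece. The remaining equivalence of presenting matrices over $\rr[t]$ is then purely elementary in $\GL$ and can be translated into a chain of similarities and nilpotent extensions over $\rr$, the Maller--Shub generators of SSE-$\rr$, producing SSE-$\rr$ between $B$ and $A \oplus N$.

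Part (2) now follows by combining part (1) with the invariant $\phi$. Surjectivity of $[I - tN] \mapsto [A \oplus N]_{SSE}$ is immediate from part (1). Injectivity of this map is immediate from $\phi$: if $A \oplus N_1 \sim_{SSE} A \oplus N_2$, then applying $\phi$ gives $[I - tN_1] = [I - tN_2]$ in $\textnormal{NK}_1(\rr)$. Well-definedness of the map (if $[I - tN_1] = [I - tN_2]$ in $\textnormal{NK}_1(\rr)$, then $A \oplus N_1 \sim_{SSE} A \oplus N_2$) follows by rerunning the realization construction on the pair $(A \oplus N_1, A \oplus N_2)$: the hypothesis forces the $NK_1$-discrepancy in that construction to vanish, so no further nilpotent extension is needed and the $\GL(\rr[t])$-equivalence implements directly as similarities and nilpotent extensions.

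The main obstacle is the translation in part (1) from an abstract $\GL(\rr[t])$-equivalence of presenting matrices back to an explicit chain of ESSE-$\rr$ moves over $\rr$. In the classical settings where $\textnormal{NK}_1(\rr) = 0$ (PIDs, Dedekind and Pr\"ufer domains), no nilpotent extension is required and this is essentially the argument of Williams and Effros. For a general ring, the nontrivial $NK_1$-obstruction forces an explicit choice of nilpotent extension, and each elementary row/column operation in $\GL(\rr[t])$ must be realized as a similarity or nilpotent extension at the $\rr$-matrix level, while the stabilizations used to pass freely between SSE-$\rr$ and equality in $K_1$ must be carefully tracked. This coupling of the Bass-theoretic decomposition with the combinatorics of ESSE moves is where the technical core of \cite{BoSc1} resides.
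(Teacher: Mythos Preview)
Your invariant $\phi$ does not make sense as written. The matrix $I-tM$ lies in $\GL(\rr[t])$ if and only if $M$ is nilpotent (the formal inverse $\sum_k t^kM^k$ is a polynomial exactly then), so for a general $B$ in the SE class of $A$ there is no class $[I-tB]\in K_1(\rr[t])$, and $\phi([B]_{SSE})=[I-tB][I-tA]^{-1}$ is undefined. The Whitehead--type identity $[I-tUV]=[I-tVU]$ you invoke is only available when $UV$ (equivalently $VU$) is nilpotent. This is not a cosmetic issue: it is precisely the obstruction the paper has to work around.

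The paper's argument keeps $I-tA$ and $I-tB$ as (non-invertible) matrices and works at the level of $\textnormal{El}(\rr[t])$-equivalence, not $K_1$-equality. The foundational input is the correspondence (Theorem~\ref{finecentral}/\ref{central}): $A$ and $B$ are SSE over $\rr$ iff $I-tA$ and $I-tB$ are $\textnormal{El}(\rr[t])$-equivalent. For part~(1), SE gives $I-tB=U(I-tA)V$ with $U,V\in\GL(\rr[t])$; one rearranges to $I-tB\sim_{\textnormal{El}} W(I-tA)$ with $W=VU$, observes $[W]\in\textnormal{NK}_1(\rr)$ by setting $t=0$, replaces $W$ by $I-tN$ for nilpotent $N$, and concludes $I-tB\sim_{\textnormal{El}} I-t(A\oplus N)$. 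For part~(2), injectivity reduces to the vanishing of
\[
\tilde H_A=\{[W]\in \textnormal{NK}_1(\rr): W(I-tA)\sim_{\textnormal{El}} I-tA\}.
\]
Showing $\tilde H_A=0$ is the substantive step you are missing: it is proved by localizing at $\delta_A(t)=\det(I-tA)$ (so that $I-tA$ finally becomes invertible) and establishing that $K_1(\rr[t])\to K_1(\rr[t,\delta_A(t)^{-1}])$ is injective, via the localization sequence and an identification of the torsion category with an endomorphism category. Your $\phi$, if it could be made rigorous, would have to be defined in such a localization, and its well-definedness on SSE classes within the SE class of $A$ would be exactly the statement $\tilde H_A=0$; so you have assumed what needs to be proved.
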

We will say just a little now about
$\textnormal{NK}_1(\rr)$, a  group of great importance in algebraic
$K$-theory;
for more background, we have found \cite{Ranickibook,Rosenberg1994, WeibelBook}
very helpful.
$\textnormal{NK}_1(\rr)$ is the kernel of the map $K_1(\rr [t]) \to K_1(\rr )$
induced by the ring homomorphism $\rr [t] \to \rr $ which
sends $t$ to $0$. The finite matrix $I-tN$ corresponds to
the matrix $I-(tN)_{\infty}$ in the group $\textnormal{GL}(\rr [t])$
(with $I$ denoting the $\mathbb N \times \mathbb N$  identity matrix and
$(tN)_{\infty}$ the
$\mathbb N \times \mathbb N$ matrix which agrees with $tN$ in an upper left
corner and is otherwise zero). Every class of $\textnormal{NK}_1(\rr
)$ contains a   matrix of the form
$I-(tN)_{\infty}$ with $N$ nilpotent over $\rr$.
$\textnormal{NK}_1(\rr)$ is trivial for many rings
(e.g., any field, or more generally any left regular Noetherian
ring) but not for all rings. If
$\textnormal{NK}_1(\rr)$ is not trivial, then it is not finitely generated
as a group. From the established theory, it is easy
to give an example of
a subring $\rr$ of $\R$ for which $\textnormal{NK}_1(\rr)$ is not trivial
(Example \ref{badrealring}).

\section{Proof of  Theorem \ref{twoconj}}
\begin{proof}[Proof of Theorem \ref{twoconj}]
Given a square matrix $M$ over $\R$, let $\lambda_M$ denote its
spectral radius and define the matrix $|M|$ by $|M|(i,j)= |M(i,j)|$.

By Theorem \ref{sseclassif},
let $N$ be a nilpotent  matrix
such that $B$  is SSE over $\rr$ to
the matrix
$\begin{pmatrix} A&0\\0&N
\end{pmatrix} $ .
Suppose $M$ is a matrix
SSE over $\rr$ to $N$ and $M$ also satisfies the following
conditions:
\begin{enumerate} \label{conditions}
\item
$\lambda_{|3M|}<\lambda_A $
\item
For all positive integers $n$,
$\text{trace}(|3M|^n)\leq
\text{trace}(A^n)
$ .
\item
For all positive integers $n$ and $k$,
if $\text{tr}(|3M|^n)<\text{tr}(A^n)$,
then
$\text{tr}(|3M|^{nk})<\text{tr}(A^{nk})$.
\end{enumerate}
Then by the Submatrix Theorem (Theorem 3.1 of \cite{BH91}),
 there is a primitive
matrix $C$ SSE over $\rr$ to $A$ such that
$|3M|$ is a proper principal submatrix of $C$.
Without loss of generality, let this submatrix
occupy the upper left corner of $C$.
Define $M_0$ to be the matrix of size matching
$C$ which is $M$ in its upper left corner and which
is zero in other entries. Then $B$ is SSE over $\rr$
to the matrix
$\begin{pmatrix} C&0\\0&M_0
\end{pmatrix}$. Choose $\epsilon \in \rr$ such that
$1/3 < \epsilon < 2/3$ and compute
\begin{align*}
\begin{pmatrix}  I &-\epsilon I\\0&I\end{pmatrix}
\begin{pmatrix}  C &0\\0&M_0\end{pmatrix}
\begin{pmatrix}  I &\epsilon I\\0&I\end{pmatrix}
&=
\begin{pmatrix}  C &\epsilon ( C - M_0 )\\0&M_0\end{pmatrix}
  \\
\begin{pmatrix}  I &0\\ I&I\end{pmatrix}
\begin{pmatrix}  C &\epsilon (C - M_0 )\\0&M_0\end{pmatrix}
\begin{pmatrix}  I &0\\- I&I\end{pmatrix}
&=
\begin{pmatrix}  (1-\epsilon )C +\epsilon M_0 &\epsilon
(C - M_0)
\\(1-\epsilon )(C-M_0 )&\
\epsilon C +(1-\epsilon )M_0 \end{pmatrix} := G\ .
\end{align*}
The matrix $G$ is SSE over $\rr$ to $B$, and it is nonnegative.
The diagonal blocks have positive entries wherever $C$ does;
 because $C$
is primitive, there is a $j>0$ such that $C^j>0$, and
therefore the
diagonal blocks of $G^j$ are also positive.
Because neither offdiagonal block of $G$  is the zero block,
it follows that  $G$ is primitive.

So, it suffices to find $M$ SSE over $\rr$ to $N$ satisfying the
conditions
(1)-(3) above.
Choose $K$ such that $\text{tr}(A^k)>0 $ for all $k> K$.
Let $n$ be the integer such that
$N$ is $n\times n$, and let $J$
by the integer provided by  Proposition \ref{nilpotentproposition}
given $n$ and $K$. Given this $J$,
choose $\epsilon >0$ such that for any
$J\times  J$ matrix $M$ with $||M||_{\infty}< \epsilon$,
we have $\lambda_{3|M|}< \lambda_A$ and for
$k>K$ we also have $\text{tr}(|3M|^k)< \text{tr}(A^k)$.
Now let $\delta>0$ be as provided by Proposition \ref{nilpotentproposition}
for this $\epsilon $.

If we can now find an $n\times n$ nilpotent matrix $N'$ which is
SSE over $\rr$ to $N$ and satisfies $||N'||<\delta$, then
we can apply Proposition \ref{nilpotentproposition} to this
$N'$ to produce a matrix $M$ SSE over $\rr$ to $N$ and
with $||M||<\epsilon $ and with $\text{tr}(M^k)=0$ for $1\leq k \leq K$.
This matrix $M$ will satisfy the conditions (1)-(3).

Pick $\gamma >0$ such that
$||\gamma N||_{\infty} < \delta$.
There is a matrix $U$ in $\text{SL}(n,\R )$ such that
$U^{-1}NU = \gamma N$. The matrix $U$ is a product of basic elementary
matrices over $\R$, and these can be approximated arbitrarily
closely by basic elementary matrices over $\rr$. Consequently there
is a matrix $V$ in $\text{SL}(n,\rr )$ such that
$||V^{-1}NV||_{\infty} < \delta $. Choose
$N' = V^{-1}NV$.
\end{proof}

To prove the Proposition
\ref{nilpotentproposition} on which the proof of Theorem \ref{twoconj}
depends, we use a correspondence proved in \cite{BoSc1}.
We need some definitions.

Given a finite matrix $A$, let $A_{\infty}$ denote the
$\N \times \N$ matrix which has $A$ as its upper left
corner and is otherwise zero. In any $\N \times \N$ matrix,
$I$ denotes the infinite identity matrix.
Given a ring $R$, $\text{El}(R)$ is the group of
$\N\times \N$ matrices over $R[t]$, equal to the infinite
identity matrix except in finitely many entries, which are
products of basic elementary matrices (these basic
matrices are by definition equal to $I$ except
perhaps in a single offdiagonal entry).
For finite matrices $A,B$,
the matrices $I-A_{\infty}$ and $I-B_{\infty}$ are
$\text{El}(R[t])$ equivalent if there  are matrices
$U,V$ in $\text{El}(R[t])$ such that
$U(I-A_{\infty})V=I-B_{\infty}$.

\begin{definition} \label{asharpdefinition}
Given a finite matrix $A$ over $t\rr [t]$, choose $n\in \N$ and $k\in \N$
such that $A_1, \dots A_k$ are $n\times n$ matrices over
$\rr$ such that
\[
A_{\infty}  = \sum_{i=1}^k t^i(A_i)_{\infty}
\]
and define a finite matrix $\As= \mathcal A^{\sharp (k,n)}$ over $\rr$ by the
following block form,
in which  every block is $n\times n$:
\[
 \As =
\begin{pmatrix}
A_1 & A_2 &A_3& \dots &A_{k-2}&A_{k-1} & A_k \\
I   & 0   &0  & \dots & 0     & 0      & 0  \\
0   & I   &0  & \dots & 0     & 0      & 0  \\
0   & 0   & I & \dots & 0     & 0      & 0  \\
\dots &\dots &\dots &\dots &\dots &\dots &\dots  \\
0   & 0   & 0 & \dots &I      & 0     & 0  \\
0   & 0   & 0 & \dots &0      & I     & 0
\end{pmatrix} \ .
\]
\end{definition}
In the definition, there is some freedom in
the choice of $\As$: $k$ can be increased by
using zero matrices, and $n$ can be increased
by filling additional entries of the $A_i$
with zero. These choices do not affect the
SSE-$\rr$ class of $\As$.

\begin{theorem}\label{finecentral}\cite{BoSc1}
 Let $\rr$ be a ring. Then there is a
 bijection between the following sets:
\begin{itemize}
\item the set of
$\textnormal{El}(\rr [t])$ equivalence classes of
$\N \times \N$ matrices $I-A_{\infty}$ such that  $A$
is a finite matrix over $t\rr [t]$
\item
the set of SSE-$\rr$ classes of square matrices
 over $\rr$.
\end{itemize}
The bijection from $\textnormal{El}(\rr [t])$ equivalence classes
  to SSE-$\rr$ classes is induced by
the map
 $ I-A_{\infty}  \mapsto \As$. The inverse map (from
the set of SSE-$\rr$ classes)
 is induced by the map sending $A$ over $\rr$ to
the $\N \times \N $ matrix
$ I-tA$.
\end{theorem}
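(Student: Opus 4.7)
The plan is to define the two maps explicitly at the level of representatives, verify that each map descends to equivalence classes, and then check the two maps are mutually inverse. The main tool throughout is the Maller--Shub description of SSE-$\rr$ as the equivalence relation generated by similarity $A \sim U^{-1}AU$ (with $U \in \GL(\rr)$) and the two nilpotent extension moves recalled in the excerpt. A parallel \emph{generators-and-relations} description should be set up for El($\rr [t]$)-equivalence of matrices of the form $I-A_{\infty}$, where the basic moves are left and right multiplication by basic elementary matrices (in a single off-diagonal $\rr[t]$-entry) together with stabilization by appending identity blocks.

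For the direction sending $A$ to $I - tA$, I would check that the two SSE generators lift to El($\rr[t]$)-equivalences. Conjugation by $U \in \GL(n,\rr)$ transports $I-tA$ to $I-tU^{-1}AU$, and after stabilizing (using Whitehead's lemma to absorb $\det U$) one writes $U$ as a product of basic elementary matrices over $\rr \subset \rr[t]$, giving an El($\rr[t]$)-equivalence. For nilpotent extensions, the key computation is that $I-t\bigl(\begin{smallmatrix} A & X \\ 0 & 0\end{smallmatrix}\bigr)$ can be reduced by El($\rr[t]$) column operations that clear the $-tX$ block using the identity entries on the diagonal of the lower block; the symmetric lower-triangular extension is handled analogously, and the extension by a bare zero block is absorbed by the stabilization convention. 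In the opposite direction, starting from $I-A_{\infty}$ with $A = \sum_{i=1}^{k} t^{i} A_{i}$, I would translate each basic El($\rr[t]$) operation on $I-A_{\infty}$ into a sequence of similarities and nilpotent extensions on $\As$. Scalar elementary moves pass directly to block similarities of $\As$, while elementary moves by a polynomial $p(t)$ of degree $\geq 1$ require first enlarging $k$ (a nilpotent extension of $\As$, which does not change its SSE class, as noted after Definition \ref{asharpdefinition}) so that enough fresh subdiagonal identity blocks are available to absorb the higher $t$-powers produced by the operation.

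The final step is the inverse relation. Starting from $A$ over $\rr$ and viewing $I-tA$ as having only one coefficient $A_{1}=A$, the $\sharp$-construction with $k=1$ degenerates to $\As = A$, so that composition is the identity on the nose. Conversely, starting from $I-A_{\infty}$ with $A = \sum t^{i} A_{i}$, I would write out $I - t\As$ in block form and produce an explicit El($\rr[t]$)-equivalence back to $I-A_{\infty}$: block row operations that use the subdiagonal $I$'s to successively eliminate the lower block rows telescope all the coefficients $A_{i}$ back into a single $-tA_{\infty}$ row, after which one removes the resulting identity rows and columns by stabilization. The main obstacle throughout is the second direction of well-definedness: translating a general El($\rr[t]$) operation involving a polynomial entry $p(t)$ into SSE-$\rr$ moves on $\As$. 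Scalar operations are straightforward block similarities, but polynomial operations require the bookkeeping idea above in which one first pads $\As$ with enough nilpotent subdiagonal blocks to have room to mimic the $t$-degree increase, then performs a sequence of block similarities that reproduce the effect of the polynomial elementary move coefficient by coefficient. Producing an explicit dictionary between the basic El($\rr[t]$) moves and finite compositions of SSE moves, and verifying that the dictionary is compatible with the relations defining each equivalence, is where the bulk of the combinatorial work lies.
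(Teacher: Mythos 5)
Your overall architecture --- define the two maps on representatives, verify invariance under generators, check the two compositions --- matches the paper's plan, and your forward direction is sound. Where you differ mildly is in the choice of SSE generators: you reduce to the Maller--Shub moves (similarity and nilpotent extension), whereas the paper works directly from an elementary SSE $A=UV$, $B=VU$ and uses Maller's block identity
$\left(\begin{smallmatrix} I & 0 \\ V & I \end{smallmatrix}\right)\left(\begin{smallmatrix} A & U \\ 0 & 0 \end{smallmatrix}\right)=\left(\begin{smallmatrix} 0 & U \\ 0 & B \end{smallmatrix}\right)\left(\begin{smallmatrix} I & 0 \\ V & I \end{smallmatrix}\right)$
to produce an explicit $\text{El}(\rr[t])$-equivalence between $I-tA$ and $I-tB$; both routes work, but the paper's is a shade more direct. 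Your treatment of the composition $A \mapsto I-tA \mapsto (tA)^\sharp = A$ and the telescoping reduction of $I - t\As_\infty$ back to $I - A_\infty$ is also reasonable and is essentially the standard argument underlying $\textnormal{Nil}_0(\rr)\cong\textnormal{NK}_1(\rr)$. One small slip: clearing the block $-tX$ in $\bigl(\begin{smallmatrix}I-tA & -tX\\ 0 & I\end{smallmatrix}\bigr)$ uses row operations against the identity block, not column operations.

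The genuine gap is the step you yourself flag as ``where the bulk of the combinatorial work lies'': showing that a single basic $\textnormal{El}(\rr[t])$ operation with entry $rt^{\ell}$, $\ell\geq 1$, translates into SSE-$\rr$ moves on $\As$. Your sketch --- pad $\As$ with zero coefficient blocks and extra subdiagonal identities to absorb the degree jump, then apply block similarities --- cannot work as literally stated. Padding by zeros followed by a single change of basis preserves the similarity class of the padded matrix, hence preserves its nilpotent Jordan type up to added trivial blocks; but an $\textnormal{El}(\rr[t])$-operation of positive $t$-degree can in general change the Jordan structure of the nilpotent part of $\As$, so no conjugation of the padded matrix can reach $\Bs$. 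What is actually required are genuine stable moves that change the matrix size in a nontrivial way, and the paper isolates exactly this in its Lemma on basic elementary operations: a choreographed sequence of a column splitting (factoring $\Bs=WM$ and passing to $MW$), a diagonal refactorization, block column amalgamations, and block row amalgamations, plus a separate companion argument handling the transposed case $(I-A)E$ via the column-form matrix $A^{\textnormal{col}}$. Those moves are of course expressible through Maller--Shub generators in principle, but the reduction is not ``pad once, then conjugate,'' and producing the explicit dictionary is precisely the content the proposal leaves unestablished.
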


By the degree of a  matrix with polynomial entries we mean the maximum
degree of
its entries. If $M$ is a matrix over $\R [t]$,
with entries $M(i,j)= \sum_{i,j,k} m_{ijk}t^k$,
then we define
$||M||= \max_{k>0} \max_{i,j} |m_{ijk}|$. If $M$ is a matrix over $\R$,
with $M(i,j)=  m_{ij}$,
then $||M||_{\infty} $ is the usual sup norm,
$||M||_{\infty}= \max_{i,j} |m_{ij}|$.

%
\begin{lemma}\label{nilpotentlemma}
Suppose $\rr$ is a dense subring of $\R$
 and $A$ is an $n\times n$   matrix
of degree $d$
over $t^k\rr [t]$, with entries $a_{ij} = \sum_{1\leq r \leq d} a_{ij}^{(r)} t^r $.
Suppose $\sum_{i=1}^na^{(k)}_{ii}=0$ and
$||A|| \leq \frac 1{4n^2}$.
Then there is
an $n\times n$
matrix $B$ over $t^{k+1}\rr [t]$ such that
$I-A_{\infty}$ is
$\textnormal{El}(\rr [t])$ equivalent to
$I-B_{\infty}$ and
the following hold:
\begin{enumerate}
\item
$\textnormal{degree}(B) \ \leq \
\textnormal{degree}(A) + 3k
$.
\item $||B||\  \leq \ 4n^3||A||$ .
\end{enumerate}
\end{lemma}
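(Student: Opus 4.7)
The plan is to construct $B$ from $A$ by performing a carefully chosen finite sequence of $\textnormal{El}(\rr[t])$-equivalence moves on $I - A_\infty$ that kill the degree-$k$ coefficient $A_k := (a^{(k)}_{ij})$ of $A$. The hypothesis $\tr(A_k)=0$ is precisely what permits the diagonal part of $A_k$ to be removed by such moves, while $\|A\| \leq 1/(4n^2)$ keeps successive perturbations under control.

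The basic off-diagonal move is the identity
\[
(I - A_\infty)(I + t^k c E_{ij}) \;=\; I - \bigl(A_\infty - t^k c E_{ij} + t^k c\, A_\infty E_{ij}\bigr), \qquad i \ne j,
\]
which shows that right multiplication by a basic elementary matrix subtracts $t^k c E_{ij}$ from $A_\infty$ at the cost of a correction lying in $t^{2k}\rr[t]$. Setting $c = a^{(k)}_{ij}$ kills the $(i,j)$ entry of $A_k$ without disturbing any other degree-$k$ coefficient. Combining these moves over all off-diagonal positions reduces the $t^k$ coefficient of $A$ to a purely diagonal matrix $D = \diag(d_1, \ldots, d_n)$ with $\sum_i d_i = 0$, at the cost of raising the overall degree by at most $k$ and the sup-norm by an order-one factor (using the hypothesis $\|A\| \le 1/(4n^2)$).

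To remove $D$, telescope $D = \sum_{i=1}^{n-1} e_i (E_{ii} - E_{i+1,i+1})$, so that it suffices to eliminate one summand $e(E_{ii} - E_{jj})$ at a time, using the Whitehead-type commutator identity
\[
(I + \alpha E_{ij})(I + \beta E_{ji})(I - \alpha E_{ij})(I - \beta E_{ji}) \;=\; I + \alpha\beta\,(E_{ii} - E_{jj}) + R,
\]
where direct expansion yields $R = \alpha^2\beta^2 E_{ii} - \alpha^2\beta\, E_{ij} + \alpha\beta^2 E_{ji}$. Choose $\alpha$ and $\beta$ as monomials in $t$ with $\alpha\beta = e\,t^k$, for instance $\alpha = t^a$ and $\beta = e\,t^b$ with $a,b \geq 1$ and $a+b = k$ when $k \geq 2$, so that every entry of $R$ has degree $\geq k+1$. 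The boundary case $k=1$ requires an extra basic elementary factor to cancel the residual degree-$1$ off-diagonal contribution of $R$. In either case, right multiplication of the current $I - A'_\infty$ by the resulting product $W$ removes the $(E_{ii} - E_{jj})$-summand from the $t^k$ coefficient and introduces new entries of degree at most $\deg(A'_\infty) + 2k$.

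The main obstacle is the bookkeeping. One must organize the elementary operations so that the cumulative degree growth is at most $3k$: the off-diagonal pass raises the degree by $k$ and the diagonal commutators contribute a further $2k$, matching the claimed bound (1). For the norm, each elementary operation can amplify the sup-norm by a small factor depending on $n$ and $\|A\|$, and with $\|A\| \leq 1/(4n^2)$ the cumulative amplification remains polynomial; a careful crude accounting produces $\|B\| \leq 4n^3 \|A\|$ as in (2). The twin technical hurdles are (a) producing a diagonal trace-zero perturbation using only products of basic (and therefore off-diagonal) elementary matrices via the Whitehead commutator, and in particular handling the delicate case $k=1$, and (b) controlling the cumulative norm and degree growth to achieve the polynomial bounds stated, rather than the exponential bounds that a naive sequential iteration would give.
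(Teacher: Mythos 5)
Your approach is genuinely different from the paper's, and the comparison is instructive, but as written it has real gaps that prevent it from establishing the lemma.

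The paper handles the \emph{diagonal} of the degree-$k$ coefficient first, not last. It adjoins a dummy $(n{+}1)$st row and column whose auxiliary column carries the diagonal coefficients $a^{(k)}_{ii}t^k$, then does a column add, a row add (whose effect on the dummy row is exactly $\sum_i a^{(k)}_{ii}t^k = 0$, so the trace hypothesis is cashed in here), a row add back, and finally discards the auxiliary index. At that point the diagonal $t^k$-terms are gone at essentially no cost, and what remains is a straightforward clearing of off-diagonal degree-$k$ terms, done in two passes (rows for the upper triangle, columns for the lower triangle) in an order that guarantees the source row or column used at each step is still pristine — which is what keeps the degree from compounding beyond $d + 3k$. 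You instead clear the off-diagonal $t^k$-terms first via $(I - A_\infty)(I + t^k c E_{ij})$-type column moves, and then try to remove the residual \emph{diagonal} degree-$k$ matrix $D = \operatorname{diag}(d_1,\dots,d_n)$, $\sum d_i = 0$, by telescoping it through Whitehead commutators. Your commutator computation is correct, and exploiting trace-zero via telescoping is a sensible idea; but it runs into problems that the paper's dummy-index trick sidesteps.

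The most concrete gap is the case $k=1$, and it is not an edge case: the recursion in Proposition 3.5 starts with $B_0 = tN$, so the very first application of this lemma has $k = 1$. Your commutator $W$ requires a factorization $\alpha\beta = e t^k$ with both $\alpha$ and $\beta$ of positive $t$-degree in order for the residue $R = \alpha^2\beta^2 E_{ii} - \alpha^2\beta E_{ij} + \alpha\beta^2 E_{ji}$ to land entirely in $t^{k+1}\rr[t]$; for $k=1$ one of $\alpha,\beta$ is a constant, and then one of the off-diagonal terms of $R$ is again degree $1$, i.e.\ degree $k$. You flag this as a ``hurdle'' and suggest ``an extra basic elementary factor,'' but you never exhibit it or verify that it restores the degree, the norm bound, and the trace-zero bookkeeping all at once. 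As it stands the argument does not close in the one case it must handle to start the induction.

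Second, your degree claim ``the off-diagonal pass raises the degree by at most $k$'' is asserted, not proved, and is false without a careful ordering. Each individual move $(I - A'_\infty)(I + t^k c E_{ij}) = I - (A'_\infty - t^k c E_{ij} + t^k c\, A'_\infty E_{ij})$ adds a correction whose degree is $k$ more than the degree of the \emph{current} $A'$, so doing the off-diagonal positions one after another can stack degree contributions unless you arrange the ordering so that the source column used at each step has never been modified. The paper does exactly this accounting (row pass with sources $j>i$, column pass with sources $j>i$), and only because of that ordering is the final degree $\le d + 3k$ and the final norm $\le 4n^3\|A\|$. Similarly, right-multiplying successively by $n-1$ Whitehead commutators, each of degree $2k$, naively compounds to degree $d + 2(n-1)k$; to recover the $+3k$ total you would again need a non-compounding ordering or a one-shot product, and you would need an entirely parallel accounting for the sup-norm, where the paper uses the hypothesis $\|A\| \le 1/(4n^2)$ in each of its five steps to keep the accumulated factor at $4n^3$. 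None of this is done. So even granting the $k\ge 2$ commutator, items (1) and (2) of the lemma are not established by the argument you give.

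In short: the decomposition ``off-diagonal then diagonal via Whitehead'' is a real alternative to the paper's ``diagonal via a dummy index then off-diagonal,'' and it is worth knowing that the trace condition can in principle be exploited through commutators rather than through a row-sum. But the paper's route is cleaner precisely because it handles the diagonal without spilling any degree-$k$ debris (so $k=1$ is no special case) and because its explicit row/column order makes the degree and norm bounds mechanical. Your route would need (a) a worked-out fix for $k=1$, and (b) an explicit non-compounding ordering together with the resulting degree and norm estimates, before it could substitute for the paper's proof.
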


\begin{proof}
For finite square matrices $I-C$ and  $I-D$, we
use $I-C \sim I-D$ to denote elementary equivalence
over $\rr [t]$ of
$I-C_{\infty}$ and
$I-D_{\infty}$.  We have
\begin{align*}
I-A \ &=\
\begin{pmatrix}
1-a_{11} & -a_{12} & \cdots & -a_{1n} \\
-a_{21} & 1-a_{22} & \cdots & -a_{2n} \\
\vdots & & \ddots & \vdots \\
-a_{n1} & -a_{n2} & \cdots & 1-a_{nn} \end{pmatrix} \\
&\sim  \
\begin{pmatrix} 1-a_{11} & -a_{12} & \cdots & -a_{1n} & a^{(k)}_{11}t^k \\
-a_{21} & 1-a_{22} & \cdots & -a_{2n}& a^{(k)}_{22}t^k \\
\vdots & & \ddots & \vdots & \vdots \\
-a_{n1} & -a_{n2} & \cdots & 1-a_{nn} &  a^{(k)}_{nn}t^k  \\
0 & 0 & \cdots & 0 & 1\end{pmatrix} := I-A_1 \ .
\end{align*}
In order, apply the following elementary operations:
\begin{enumerate}
\item
For $1\leq j\leq n$, add column $n+1$ to column $j$ of $I-A_1$, to produce
a matrix $I-A_2$.
Then $\textnormal{degree} (A_2)= \textnormal{degree} (A)$;
the diagonal entries of $A_2$ lie in $t^{k+1}\rr [t]$;
 and
 $||A_2||\leq 2||A_1||=2||A||$.
Every entry in row $n+1$ of $I-A_2$ equals 1.
(By definition these entries have no impact on $||A_2||$.)
\item
For $1\leq i \leq n$, add (-1)(row $i$) of $(I-A_2)$
to row $n+1$ to form
$I-A_3$. Then the entries of
 $A_3$ lie in $t^k \rr [t]$, and the diagonal entries of
$A_3$ still lie in  $t^{k+1} \rr [t]$, since
 $\sum_{i=1}^na^{(k)}_{ii}=0$ .
We have  $||A_3||\leq n||A_2||\leq 2n||A||<1$
 and
 $\textnormal{degree} (A_3)\leq \textnormal{degree} (A)$ .
\item
For $1\leq i \leq n$, add ($-a^{(k)}_{ii}t^k$)(row $n+1$) of $(I-A_3)$
to row $i$ to form $I-A_4$.
In block form,
\[
I-A_4 \ = \
\begin{pmatrix} I-A_5 & 0 \\ x & 1
\end{pmatrix}
\]
in which $A_5$ is $n\times n$ and
$x=(x_1\cdots x_n)$. Adding multiples of column $n+1$ to
columns $1, \dots , n$ to clear out $x$, we see
$I-A_5\sim I-A$. We have
$\textnormal{degree} (A_5)\leq \textnormal{degree} (A)+k $ and
\begin{align*}
||A_5||\ &
\leq \ || A_3|| + (|| A||)(  ||A_3|| ) \\
&\leq \ 2||A_3||\  \leq \ 4n ||A||\ < \ 1 \ .
\end{align*}
In $A_5$, the diagonal terms  lie in  $t^{k+1}\rr [t]$ and
the offdiagonal terms
lie in $t^{k}\rr [t]$.
In the next two steps,
we apply elementary
 operations to clear the degree $k$ terms outside the diagonal.
We use part of a  clearing algorithm from \cite{S12}.

\item
Let $b_{ij}$ be the coefficient of $t^k$ in $A_5(i,j)$.
For $2\leq i \leq n$, add $(-b_{1j}t^k)(\text{row }j)$ to
row 1. Continuing in order for rows $i=2, \dots , n-1$:
for $i+1\leq j \leq n$, add
$(-b_{ij}t^k)(\text{row }j)$ to
row $i$. Let $(I-A_6)$ be the resulting matrix.
The entries of $A_6$ on and above the diagonal lie in
$t^{k+1}\rr [t]$. We have
\[
\textnormal{degree} (A_6)\ \leq \ \textnormal{degree} (A_5)+k
\ \leq \ \textnormal{degree} (A)+2k
\]
and
\begin{align*}
||A_6||\ & \leq \ ||A_5|| + (n-1)||A_5||^2 \\
& \leq \ n||A_5||
\ \leq \ 4n^2 ||A|| \leq 1 \ .
\end{align*}
\item
Let $c_{ij}$ denote the coefficient of $t^k$ in
$A_6(i,j)$. For  $2\leq j \leq n$, add $(-c_{j1}t^k)(\text{column }j)$
of $A_6$ to column 1. Continuing in order for columns $i=2, \dots , n-1$:
for $i+1\leq j \leq n$, add
$(-c_{ji})(\text{column }j)$ to
column $i$. For the resulting matrix $(I-B)$,
the entries of $B$ lie in $t^{k+1}\rr [t]$, with
\[
\textnormal{degree} (B)
\ \leq \ \textnormal{degree} (A_6)+k \
\leq \ \textnormal{degree} (A)+3k
\]
 and
\begin{align*}
||B||\ & \ \leq \ ||A_6||+ (n-1) ||A_6||^2 \\
&\ \leq \ n ||A_6||\  \leq \ 4n^3
||A|| \ .
\end{align*}
\end{enumerate}
\end{proof}

\begin{proposition} \label{nilpotentproposition}
Suppose $\rr$ is a dense subring of $\R$,
  $n\in \N$ and $K\in \N$.
Then there is a $J$ in $\N$ such that for any $\epsilon >0$
there exists $\delta>0$ such that the following holds: \\
if $N$ is a  nilpotent $n\times n$ matrix over $\rr$ and
$||N||_{\infty}<\delta$, then there is a $J\times J$ matrix $M$
over $\rr$ such that
\begin{enumerate}
\item
$M$ is SSE over $\rr$ to $N$,
\item
$\textnormal{tr}\, (|M|^k) =0$ for $1\leq k \leq K$, and
\item
$||M||_{\infty}< \epsilon $ .
\end{enumerate}
\end{proposition}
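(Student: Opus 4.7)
The plan is to translate the problem to polynomial matrices via Theorem~\ref{finecentral}, iterate Lemma~\ref{nilpotentlemma} until the representing polynomial matrix lies in $t^{K+1}\rr[t]$, and then form the $\sharp$-construction; the resulting block-cycle structure will immediately force condition~(2), while the iterated norm bound together with a block-diagonal similarity handles condition~(3).

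Start with $A^{(0)}=tN$, an $n\times n$ matrix over $t\rr[t]$ with $\|A^{(0)}\|=\|N\|_\infty$.  Since $N$ is nilpotent, $\det(I-A^{(0)})=1$; this identity is preserved by $\text{El}(\rr[t])$-equivalence, so at each step of the iteration the coefficient of $t^k$ in $\det(I-A^{(k-1)})=1$ equals $-\text{tr}(A^{(k-1)}_k)$, and the trace hypothesis of Lemma~\ref{nilpotentlemma} is automatic.  Choosing $\delta$ small enough that $(4n^3)^{K-1}\delta\le 1/(4n^2)$ preserves the norm hypothesis $\|A^{(k-1)}\|\le 1/(4n^2)$ throughout.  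After $K$ applications, $A^{(K)}$ lies in $t^{K+1}\rr[t]$, with $\|A^{(K)}\|\le(4n^3)^K\delta$ and degree at most $D:=1+3K(K+1)/2$.  Set $J=nD$ and define $M:=(A^{(K)})^{\sharp(D,n)}$; by Theorem~\ref{finecentral}, $M$ is SSE-$\rr$ to $N$, giving condition~(1).  Moreover, the first $K$ blocks of the first block-row of $M$ are zero, so any closed walk in the block graph must make at least one excursion of length $\ge K+1$ through block $1$; this forces $\text{tr}(|M|^k)=0$ for $1\le k\le K$, which is condition~(2).

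Condition~(3) is the delicate point.  The non-identity entries of $M$ are coefficients of $A^{(K)}$ and so are bounded by $(4n^3)^K\delta$, which can be made arbitrarily small by choosing $\delta$ small; but the subdiagonal $I$-blocks of $M$ contribute entries of magnitude exactly $1$.  To suppress these while preserving the zero-block pattern required by condition~(2), I would post-compose with a block-diagonal similarity $V=\text{diag}(V_1,\dots,V_D)$, each $V_p\in\text{GL}(n,\rr)$, chosen so that the transformed subdiagonal blocks $V_{p+1}^{-1}V_p$ and the transformed first-row blocks $V_1^{-1}A^{(K)}_jV_j$ all satisfy $\|\cdot\|_\infty<\epsilon$.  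Realizing such $V_p$ over the dense subring $\rr$ rather than merely over $\R$ is the main obstacle, and should be done by the approximation-by-products-of-elementary-matrices technique already deployed in the proof of Theorem~\ref{twoconj}.  The resulting $\delta=\delta(\epsilon)$ compounds the iteration bound $(4n^3)^K$ with the scaling from the similarity; making this last step rigorous---shrinking the subdiagonal $1$-entries while staying in $\rr$ and preserving the precise zero pattern---is the hardest part of the argument.
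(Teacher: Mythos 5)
Your plan tracks the paper's own proof through conditions (1) and (2): push the representing polynomial matrix into $t^{K+1}\rr[t]$ by iterating Lemma~\ref{nilpotentlemma}, form the $\sharp$-matrix, set $J=nd$ with $d = 1+3K(K+1)/2$, and read off condition (2) from the block-companion graph (any closed walk in $|(B_K)^\sharp|$ must pass through block $1$, and the shortest first-return loop there has length $\geq K+1$). Your determinant observation is a clean way to see the recursive trace hypothesis, equivalent in substance to the paper's use of nilpotency of $(B_k)^\sharp$.

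The gap you flag at condition (3) is genuine, but your proposed repair cannot work over a general dense subring. If $V=\diag(V_1,\dots,V_D)$ with each $V_p\in\GL(n,\rr)$, then each transformed subdiagonal block $V_{p+1}^{-1}V_p$ has determinant a unit of $\rr$. Take $\rr=\Z[\pi]$, a dense subring of $\R$ isomorphic to $\Z[x]$, whose only units are $\pm 1$: an $n\times n$ matrix over $\rr$ with determinant $\pm 1$ and all entries of absolute value $<\epsilon$ is impossible once $n!\,\epsilon^n<1$. The elementary-approximation trick used elsewhere in Theorem~\ref{twoconj} (to approximate $U\in\textnormal{SL}(n,\R)$ by $V\in\textnormal{SL}(n,\rr)$) does not help, since those matrices also have determinant $1$. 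You should also be aware that the paper's own last line has the same flaw: the assertion $\|B_K\|=\|(B_K)^\sharp\|_\infty$ is false when $\|B_K\|<1$, because the subdiagonal identity blocks force $\|(B_K)^\sharp\|_\infty\geq 1$. So the sup-norm bound in condition (3) as stated is not delivered by the $\sharp$-construction.

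What actually gets used in the proof of Theorem~\ref{twoconj} is only $\lambda_{|3M|}<\lambda_A$ and $\tr(|3M|^k)<\tr(A^k)$ for $k>K$, and these follow from a bound on the spectral radius $\lambda_{|M|}$ rather than on $\|M\|_\infty$. The block-companion structure does give a small spectral radius: writing $B_K=\sum_{j>K}(B_K)_jt^j$, the nonzero spectrum of $|M|=|(B_K)^\sharp|$ consists of roots of $\det\left(\lambda^d I-\sum_{j>K}\lambda^{d-j}|(B_K)_j|\right)=0$, from which $\lambda_{|M|}\leq (nd\,\|B_K\|)^{1/d}$ once $\|B_K\|<1/(nd)$, a quantity that tends to zero with $\|B_K\|$ since $d$ is fixed by $n,K$. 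Replacing condition (3) of the Proposition by a bound on $\lambda_{|M|}$ (or directly on the traces $\tr(|M|^k)$ for $k>K$) gives a correct variant that the $\sharp$-construction does satisfy and that still supports Theorem~\ref{twoconj}; that, rather than a similarity scaling, is the way to close the argument.
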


\begin{proof}
Because $N$ is nilpotent, $\text{tr}(N^k)=0$ for all
positive integers $k$.
Set $B_0=tN$. We define matrices $B_1, \dots ,B_K$ recursively,
letting $I-B_{k+1}$ be the matrix $I-B$ provided by
Lemma \ref{nilpotentlemma} from input $I-A=I-B_{k}$.
The conditions of the lemma are satisfied recursively, because
the (zero) trace of the $k$th power of the nilpotent matrix $(B_k)^{\sharp}$
must be (in the terminology of the lemma) $\sum_i a_{ii}^{(k)}$.
The matrix $B_{K}$ is $n\times n$
with  entries of degree at most
\[
d:= 1 +3(1) + 3(2) + \dots + 3(K)
= 1 + 3K(K+1)/2 \ .
\]
Let
$(B_K)_i$ be the matrices, $1\leq i \leq d$, such that
$B_K=
\sum_{i=1}^d (B_K)_i t^i$ .
Define $M$ to be the matrix
$(B_K)^{\sharp}$, an  $nd\times nd$ matrix over $\rr$
which is SSE over $\rr$ to $N$. Set $J=
nd $.

It is now clear from condition $(2)$
of Lemma \ref{nilpotentlemma} and induction
that given $\epsilon > 0$, there is a $\delta >0$
such that $||N||<\delta $ implies
$|| (B_K)||< \epsilon$. (We are not trying to
optimize estimates.) With $K>1$ (without loss of
generality), we have $|| B_K||= || (B_K)^{\sharp}||_{\infty}$.
This finishes the proof.
\end{proof}

\begin{example} \label{badrealring}
 {\normalfont
There are subrings of $\R$ with nontrivial $\textnormal{NK}_1$.
For example, let $\rr=\mathbb Q [t^2,t^3,z,z^{-1}]$. By the Bass-Heller-Swan Theorem (see \cite{Rosenberg1994}, 3.2.22) for any ring $\mathcal{S}$, there is a splitting $K_{1}(\mathcal{S}[z,z^{-1}]) \cong K_{1}(\mathcal{S}) \oplus K_{0}(\mathcal{S}) \oplus \textnormal{NK}_{1}(\mathcal{S}) \oplus \textnormal{NK}_{1}(\mathcal{S})$, which implies $\textnormal{NK}_{1}(\mathcal{S}[z,z^{-1}])$ always contains a copy of $\textnormal{NK}_{0}(\mathcal{S})$. An elementary argument (see for example exercise 3.2.24 in \cite{Rosenberg1994}) shows that $\textnormal{NK}_{0}(\mathbb{Q}[t^{2},t^{3}]) \ne 0$, so $\textnormal{NK}_{1}(\mathbb{Q}[t^{2},t^{3},z,z^{-1}])$ is non-zero. Since
$\mathbb{Q}[t^{2},t^{3},z,z^{-1}]$ can be realized as a subring of $\mathbb{R}$
(by an embedding
 sending $t,z$ to
algebraically independent
transcendentals in $\mathbb{R}$)
this provides an example of a subring
$\rr$  of $\mathbb{R}$ for which $\textnormal{NK}_{1}(\Rcal)$ is not zero, and
therefore shift equivalence over $\rr$
does not imply strong shift equivalence
 over $\rr$. \\

It is possible
to produce explicit
 examples
by tracking through the
exact sequences behind the argument of the last paragraph.
This is done in \cite{SchmiedingExamples},
and for $\rr = \mathbb{Q}[t^{2},t^{3},z,z^{-1}]$
yields the following matrix
over $\rr [s]$,
\[
I-M=\begin{pmatrix} 1-(1-z^{-1})s^{4}t^{4} & (z-1)(s^{2}t^{2}-s^{3}t^{3})
  \\ (1-z^{-1})(s^{2}t^{2})(1+st+s^{2}t^{2}+s^{3}t^{3}) &
  1+(z-1)(s^{4}t^{4}) \end{pmatrix} \ ,
\]
 which is nontrivial as an element
of $\textnormal{NK}_1(\rr )$.
Writing $M$ as
\[
M=\begin{pmatrix}
(1-z^{-1})s^{4}t^{4} & (1-z)(s^{2}t^{2}-s^{3}t^{3})  \\
(z^{-1}-1)(s^{2}t^{2})(1+st+s^{2}t^{2}+s^{3}t^{3}) &
(1-z)(s^{4}t^{4}) \end{pmatrix}
= \sum_{i=1}^5 s^iM_i
\]
with the
$M_i$ over $\rr$, we obtain (see \cite{BoSc1}) a nilpotent matrix $N$ over $\rr$,
\begin{align*}
&N\ =\
\begin{pmatrix}
M_1 & M_2 & M_3 & M_4 & M_5 \\
I       &  0    &   0    &  0    &  0    \\
0      &  I     &   0    &  0    &  0    \\
0      &  0    &   I    &  0    &  0    \\
0      &  0    &   0    &  I    &  0
\end{pmatrix} \ =\ \\
&
\begin{pmatrix}
0&0 &         0&(1-z)t^2 &     0&(1-z)(-t^3) & (1-z^{-1})t^4&0 & 0&0\\
0&0 & (z^{-1}-1)t^2&0 & (z^{-1}-1)t^3&0 & (z^{-1}-1)t^4&(1-z)t^4 & (z^{-1}-1)t^5&0\\
1&0 & 0&0 & 0&0 & 0&0 &0&0 \\
0&1 & 0&0 & 0&0 & 0&0 &0&0 \\
0&0 & 1&0 & 0&0 & 0&0 &0&0 \\
0&0 & 0&1 & 0&0 & 0&0 &0&0 \\
0&0 & 0&0 & 1&0 & 0&0 &0&0 \\
0&0 & 0&0 & 0&1 & 0&0 &0&0 \\
0&0 & 0&0 & 0&0 & 1&0 &0&0 \\
0&0 & 0&0 & 0&0 & 0&1 &0&0
\end{pmatrix}
\end{align*}
which is nontrivial as an element of $\textnormal{Nil}_{0}(\rr)$, as
is the matrix
$N'$  obtained
by
removing the last row and the last column from $N$.

%
%
%
%
%
%
%
The matrix $N'$ is $9\times 9$. We don't have a smaller example, and
we don't have a decent example of two positive matrices which are
shift equivalent but not strong shift equivalent over a subring of
$\mathbb R$.
}
\end{example}

\begin{remark} \label{nilpotentnonneg}
 {\normalfont
Suppose $\rr$ is a subring of $\R$ and $N$ is a nonnegative
nilpotent matrix over $\rr$. Then there is a permutation
matrix $P$ such that $P^{-1}NP$ is triangular with zero diagonal.
Using elementary SSEs of the block form
\[
\begin{pmatrix} X&Y\\0&0
\end{pmatrix}
=
\begin{pmatrix} I\\0
\end{pmatrix}
\begin{pmatrix} X&Y
\end{pmatrix}
\qquad \text{and} \qquad
\begin{pmatrix} X
\end{pmatrix}
=
\begin{pmatrix} X&Y
\end{pmatrix}
\begin{pmatrix} I\\0
\end{pmatrix}
\]
we see that  $P^{-1}NP$ (and hence $N$)
is SSE over $\rr$ to $[0]$.
By Theorem \ref{sseclassif},  with $A=0$,  it follows that
a nilpotent matrix $N$ is
SSE over $\rr$ to a nonnegative matrix if and only if $[I-tN_{\infty} ]$
is trivial in $\textnormal{NK}_1(\rr  )$.
Therefore,
if (and only if)
$\textnormal{NK}_1(\rr )$ is nontrivial, there will be
nilpotent matrices over $\rr$ which cannot be SSE over $\rr$ to a
nonnegative matrix.
The matrix $N$ in Example \ref{badrealring} is one such example.
}
\end{remark}

\section{Reflections on the Generalized Spectral Conjecture}


Is the Generalized Spectral Conjecture true?

For $\rr=\Z$, the Spectral Conjecture is true \cite{S8}.
The GSC is true for $\rr=\Z$ for a given
$\Delta$ if every entry of $\Delta$ is a rational
integer \cite{BH93}. There is not much more
direct evidence for the GSC for $\rr=\Z$,
but we know of no results which
cast doubt.

From here,  suppose $\rr$ is a dense subring of $\R$.
As noted earlier, the Spectral Conjecture is almost surely true.
Theorem \ref{twoconj} removes the possibility  that
 the very subtle algebraic invariants
following from Theorem
\ref{sseclassif} could be an obstruction to the GSC.
The GSC was proved in \cite{BH93} in the following cases:
\begin{enumerate}
\item
when the nonzero spectrum is contained in $\rr$, and
$\rr$ is a Dedekind domain with a nontrivial unit;
\item \label{postrpro}
when the nonzero spectrum has positive trace and
either (i) the spectrum is real or (ii)
the minimal and characteristic polynomials
of the given matrix are equal up to a power of the
indeterminate.
\end{enumerate}

The following Proposition (almost  explicit in
\cite[Appendix 4]{BH91}) is more evidence for the
GSC  in the positive trace case.

\begin{proposition} \label{realreduction}
Suppose the Generalized Spectral Conjecture holds for
matrices of positive trace for the ring $\R$. Then it
holds for matrices of positive trace for every dense subring
$\rr$ of $\R$.
\end{proposition}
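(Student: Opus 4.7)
The plan is to combine the hypothesized GSC for $\R$ with Theorem \ref{twoconj}, passing through the weak form of the GSC for $\rr$ as an intermediate stage.

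Suppose $A$ is a matrix over $\rr$ of positive trace whose nonzero spectrum $\Delta$ satisfies the three necessary conditions. Viewing $A$ as a matrix over $\R$, the hypothesized GSC for $\R$ produces a primitive matrix $Q$ over $\R$ with $A$ SSE-$\R$ to $Q$; since $\R$ is a field, $\textnormal{NK}_1(\R)=0$, and this is equivalent to $A$ SE-$\R$ to $Q$. The main step is then to upgrade this $\R$-realization to an $\rr$-realization: following the argument template of \cite[Appendix 4]{BH91}, where the proposition is said to be almost explicit, one produces a primitive matrix $P$ over $\rr$ lying in the same SE-$\rr$ class as $A$. The positive trace hypothesis keeps the relevant spectral quantities (the Perron value and the net traces of $\Delta$) uniformly bounded away from degenerate values, providing the slack required to push the real construction down to the dense subring $\rr$ by approximation, while the Submatrix Theorem (Theorem 3.1 of \cite{BH91}) combined with careful tracking of the $\rr[t]$-module $\textnormal{cok}(I-tA)$ allows one to control the full SE-$\rr$ class and not only the nonzero spectrum.

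Once such a primitive matrix $P$ over $\rr$ with $A$ SE-$\rr$ to $P$ has been produced, Theorem \ref{twoconj} applies immediately, with $P$ playing the role of the primitive matrix and $A$ playing the role of $B$; its conclusion is precisely that $A$ is SSE-$\rr$ to a primitive matrix over $\rr$, which is the strong form of the GSC for $\rr$ in the positive trace case.

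The main obstacle is the construction of $P$. Preserving the SE-$\rr$ class (and not merely the nonzero spectrum) under a density approximation requires more than the bare existence assertion of the Submatrix Theorem: one must track the $\rr[t]$-module structure through each step of the approximation. The positive trace hypothesis is exactly what makes this feasible, since it simultaneously prevents the Perron value from collapsing under rational perturbations and supplies the strict spectral dominance needed to realize $A$ (up to similarity over $\rr$) as a proper principal submatrix of the target primitive matrix, in the spirit of the block construction $G$ in the proof of Theorem \ref{twoconj}.
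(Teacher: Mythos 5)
Your proposal has a genuine gap at exactly the step you yourself flag as the ``main obstacle.'' You propose to route the argument through Theorem~\ref{twoconj}: first produce a primitive matrix $P$ over $\rr$ in the same SE-$\rr$ class as $A$, then apply Theorem~\ref{twoconj} to upgrade SE-$\rr$ to SSE-$\rr$. The second half is fine, but the first half --- finding such a $P$, which is precisely the weak GSC for $\rr$ in the positive-trace case --- is never actually proved. You say one should follow ``the argument template of \cite[Appendix 4]{BH91}'' and ``track the $\rr[t]$-module structure,'' and you assert that the positive-trace hypothesis ``is exactly what makes this feasible,'' but no concrete argument is given for why an approximation over the dense subring $\rr$ can be made to land in the same SE-$\rr$ class as $A$. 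Controlling the nonzero spectrum under a small perturbation is easy; controlling the full SE-$\rr$ class (equivalently the $\rr[t]$-module $\textnormal{cok}(I-tA)$) is the whole problem, and you have not shown how to do it.

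The paper's proof takes a different and in fact simpler route that bypasses Theorem~\ref{twoconj} entirely and never needs to pass through SE. Starting from the hypothesis that $A$ is SSE over $\R$ to a primitive real matrix $B$ of positive trace, the results of \cite{S32} produce a positive matrix $B_1$ SSE over $\R_+$ to $B$, and then, after a common row-splitting applied to $A$ and $B_1$, matrices $A_2$ (SSE over $\rr$ to $A$) and $B_2$ positive, with $U^{-1}A_2U=B_2$ for some $U\in\textnormal{SL}(m,\R)$. The decisive observation is that $\textnormal{SL}(m,\rr)$ is dense in $\textnormal{SL}(m,\R)$ and positivity of $B_2$ is an open condition: a nearby $V\in\textnormal{SL}(m,\rr)$ still makes $V^{-1}A_2V$ positive. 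Since $(V^{-1}A_2)(V)$ and $(V)(V^{-1}A_2)=A_2$ are elementary SSE over $\rr$, this hands you a positive (hence primitive) matrix SSE over $\rr$ to $A$ in one stroke. There is no need to first move within an SE class and then invoke Theorem~\ref{twoconj}; the density-of-$\textnormal{SL}$ perturbation produces the SSE-$\rr$ directly. To salvage your approach you would need an actual proof of the weak GSC for $\rr$ in the positive-trace case, which is a real result and not something that falls out of the Submatrix Theorem plus vague module-tracking.
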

\begin{proof}
Let $A$ be a square matrix over $\rr$ of positive trace
which over $\R$
is SSE to a primitive real matrix $B$.
We need to show that $A$ is SSE over $\rr$ to a primitive
matrix.

By \cite{S32} (or the alternate exposition
\cite[Appendix B]{BKR2013}), because $B$ is primitive with positive
trace, there is a positive matrix $B_1$ SSE over $\R$
(in fact over $\R_+$) to
$B$. And then, by arguments in \cite{S32},
for some $m$ there are $m\times m$ matrices
$A_2,B_2$ (obtained through
row splittings of $A$ and $B_1$ ),
with $B_2$ positive,  such that $A$ is SSE over $\rr$ to $A_2$;
$B_1$ is SSE over $\R$
(in fact over $\R_+$)
to a positive
matrix $B_2$; and there is a matrix $U$
in $\textnormal{SL}(m, \R)$ such that $U^{-1}A_2U =B_2$.
Because $\textnormal{SL}(m, \rr)$ is dense in $\textnormal{SL}(m, \R)$,
and $B_2$ is positive, there is a $V$ in
$\textnormal{SL}(m, \rr)$ such that $V^{-1}A_2V$ is positive.
This matrix $(V^{-1}A_2)(V)$ is SSE over $\rr$ to
the matrix $(V)(V^{-1}A_2)=A$.
\end{proof}

After more than 20 years, the GSC
remains open even in the case $\rr = \mathbb R$.
Still, the GSC seems correct.
What we lack is a proof.

\bibliographystyle{plain}
\bibliography{mbssbib}

\appendix

\section{Correction}

The preceding version  of ``Strong shift equivalence and the
generalized spectral conjecture for nonnegative matrices''
is essentially the same as the arxiv version 1. 
The purpose of this post is  to communicate a correction, which
we state separately since the paper has been published 
(DOI 10.1016/j.laa.2015.06.004),  in Linear Algebra and its Applications.  

Theorem \ref{sseclassif} states   a  result claimed in
the version 1 post of 
\cite{BoSc1}.  This quoted result was corrected in
the version 2  post  of 
\cite{BoSc1} (to appear in Crelle's Journal): the bijection
of Theorem 2.1(2) in general
is not to $\text{NK}_1(\calR )$, but to a certain quotient
group $\text{NK}_1(\calR )/\ear$.
The ``elementary stabilizer'' $\ear$ need not be trivial,
but is trivial in many cases (for example, if $A$ is invertible over
$\calR$). 

With the corrected reference,
the proof of equivalence of the strong and weak forms
of the Spectral Conjecture (i.e., Theorem \ref{twoconj}) goes
through without change.
Also, because $\ear$ is in many cases trivial,
it still holds
(as discussed  after the statements of the weak and
strong forms of the conjecture, bottom of page 3)
that  SE-$\calR$ in general doesn't
imply SSE-$\calR$, and therefore the  strong form of the conjecture was
not vacuously equivalent to the weak form.

\end{document}